\newtheorem{lemma}{Lemma}
\newtheorem{theorem}{Theorem}
\newtheorem{remark}{Remark}
\numberwithin{equation}{section}
\thanks
{
\begin{footnotesize}
\hspace*{-7mm}
AMS Subject Classification: 41A25, 41A28, 41A36\\
 Key Words. Linear positive operators, Rate of approximation, Modulus of continuity, Bezier variant of operators, Ditzian Totik modulus
of smoothness.
\end{footnotesize}
}}
\begin{document}

\leftline{ \scriptsize \it  }
\title [B\'{e}zier Variant of generalized Bernstein-Durrmeyer type operators]
{B\'{e}zier Variant of generalized Bernstein-Durrmeyer type operators}

\maketitle
\begin{center}
   Karunesh Kumar Singh\\
  Department of Applied Sciences and Humanities\\ Institute of Engineering and Technology\\
Lucknow-226021(Uttar Pradesh), India\\
Email: kksiitr.singh@gmail.com\\and\\

Asha Ram
Gairola\\
 Department of Mathematics,  Doon University\\
  Dehradun-248001 (Uttarakhand), India\\
  Email: ashagairola@gmail.com\\

\end{center}
\begin{abstract}
In this paper, we define   B\'{e}zier variant of generalized Bernstein-Durrmeyer type operators of second order, introduced by Ana et al. Then, we find an error estimate in terms of terms of Ditzian Totik modulus of smoothness. Next, we study rate of approximation for larger class function of bounded variation.
\end{abstract}
\section{Introduction}
 For $C[0,1],$ the space of continuous functions defined on $[0,1],$ and let $n\in \mathbb{N},$  the Bernstein polynomial operators $B_n(f):C[0,1]\longrightarrow C[0,1]$ are given by
\[B_n(f;x):=\sum_{k=0}^{n}p_{n,k}(x)f\left(\frac{k}{n}\right),\quad x\in[0,1],\]
where the basis functions for the above polynomials  $p_{n,k}(x)={n \choose k}x^k(1-x)^{n-k}$ for $0\le k\le n.$The fundamental polynomials satisfy following recurrence relation:

\begin{equation}\label{fundpoly}p_{n,k}(x)=(1-x)p_{n-1,k}(x)+xp_{n-1,k-1}(x),0<k<n.\end{equation}
  The Bernstein polynomial operators have some interesting properties eg. preservation of convexity, Lipschitz constant and  monotonicity etc. Although $B_n(f;x)$ are not suitable to approximate integrable function as such.
In order to approximate  bounded and integrable functions on $[0,1],$ Durrmeyer \cite{jld1967} and Lupa\c{s} \cite{AL1972} introduced its integral modifications as follows:
\begin{equation}\label{definitionoperator}D_n(f;x):=(n+1)\int_{0}^{1}\sum_{k=0}^{n}p_{n,k}(x)p_{n,k}(u)f(u)\,\textrm{d}u.
\end{equation}
These operators are obtained by a  suitable modification of Bernstein polynomial operators, wherein  the values $f(k/n)$ are replaced by the average values \[(n+1)\int_0^1 p_{n,k}(u)f(u)\,\textrm{d}u\] of $f$ in the interval $[0,1].$
The operators $D_n(f;x)$ have been extensively studied by
Derriennic \cite{MMD} and other authors (\cite{ZDKI1989},\cite{jld1967},\cite{HHGXLZ1991}),\cite{vg2009}, \cite{srivastavhmvgupta2003}. It is seen that the convergence of operators $D_n(f;x)$ to $f(x)$ is uniform. Also, it turns
out that the order of approximation by the operators $D_n(f;x)$ is at best, $O(n^{-1})$ however smooth the function may be.

\par
It is known that (\cite{vgrpa}) linear positive operators e.g. Bernstein polynomials operators, Baskakov operators, integral variants of Bernstein operators and others are saturated by order $n^{-1}.$ Consequently, it is not possible to improve the rate of convergence by these operators except for linear functions. Recently a new approach  to improve the rate of approximation by decomposition of the its weight function $p_{n,k}(x)$  was recently introduced by Khosravian-Arab et al.\cite{Khosravianetal}. The authors of \cite{Khosravianetal} applied this technique to the Bernstein polynomials operators and   were improved the order of approximation from linear order $O(n^{-1})$ to the  quadratic order $O(n^{-2})$ and cubic order $O(n^{-3})$ also. Subsequently, this technique was  also applied to the Bernstein-Durrmeyer operators $D_n(f;x)$ by Ana Maria et al. in \cite{ana2019}.

For $f\in L_B[0,1],$ Acu et al. \cite{ana2019} introduced generalized Bernstein Durrmeyer polynomial operators of order II as follows:
\begin{equation}\label{defoperator1}
D^{M,2}_n(f;x):=(n+1)\sum_{k=0}^{n}p_{n,k}^{M,2}(x)\int_{0}^{1}p_{n,k}(u)f(u)\,\textrm{d}u,n\ge 3
\end{equation}
where
\begin{equation}\label{fundpoly2}
p_{n,k}^{M,2}(x)=g(x,n)p_{n-2,k}(x)+h(x,n)p_{n-2,k-1}(x)+g(1-x,n)p_{n-2,k-2}(x),
\end{equation}and $p_{n-2,k}(x)=0$ if $k<0$ and $k>n-2$ and
%\[p_{n-2,k}(x)=\left\{
%\begin{array}{ll}
%0,  & \hbox{$k<0$} \\
%0,& \hbox{$t\neq x$.}
%\end{array}\right.\]
the function
$g(x,n)=g_2(n)x^2+g_1(n)x+g_0(n)$ and $h(x,n)=h_0(n)x(1-x),$
 where  $g_i(n),$ $h_i(n)$ are the unknown sequences to be determined suitably by imposing the identities $D^{M,2}_n(e_i;x)=e_i(x)$ for $i=0,1$ and for $i=2$. So we get the condition $2g_2(n)-h_0(n)=0,$ $g_2(n)+2b_0(n)+b_1(n)=1.$ Particularly, if we take $g_2(n)=g_0(n)=1,$ $g_1(n)=-2$ and $h_0(n)=2.$ Then the fundamental polynomials (\ref{fundpoly2}) for the operators $D^{M,2}_n(f;x)$ is reduced to fundamental polynomials for Bernstein polynomials in (\ref{fundpoly}).
 Moreover, if we assume sequences such that  $g_0(n)=\frac{3}{2},g_1(n)=-n$ and $g_2(n)=n-2$ and $h_0(n)=2(n-2)$. Then we have,
 \begin{lemma}\cite{ana2019}\label{momentlemma}
We have
$$D^{M,2}_{n}((u-x);x)=0,$$
\begin{align*}D^{M,2}_{n}((u-x)^2;x)&=\frac{20x(1-x)}{(n+2)(n+3)}-\frac{3}{(n+2)(n+3)}.
\end{align*}
\end{lemma}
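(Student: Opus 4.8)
The plan is to reduce both statements to the images of the monomials $e_j(u)=u^j$ under $D^{M,2}_n$, since by linearity
\begin{align*}
D^{M,2}_n((u-x);x)&=D^{M,2}_n(e_1;x)-x\,D^{M,2}_n(e_0;x),\\
D^{M,2}_n((u-x)^2;x)&=D^{M,2}_n(e_2;x)-2x\,D^{M,2}_n(e_1;x)+x^2\,D^{M,2}_n(e_0;x).
\end{align*}
Thus everything follows once $D^{M,2}_n(e_j;x)$ is computed for $j=0,1,2$. First I would record the raw Durrmeyer moments: using the Beta integral,
\[
(n+1)\int_0^1 p_{n,k}(u)\,u^j\,\textrm{d}u=(n+1)\binom{n}{k}\frac{(k+j)!\,(n-k)!}{(n+j+1)!},
\]
which equals $1,\ \frac{k+1}{n+2},\ \frac{(k+1)(k+2)}{(n+2)(n+3)}$ for $j=0,1,2$. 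Hence $D^{M,2}_n(e_j;x)=\sum_k p_{n,k}^{M,2}(x)\,\phi_j(k)$, where $\phi_0\equiv1$, $\phi_1(k)=\frac{k+1}{n+2}$ and $\phi_2(k)=\frac{(k+1)(k+2)}{(n+2)(n+3)}$.

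Next I would expand $p_{n,k}^{M,2}(x)$ via (\ref{fundpoly2}) into the three shifted bases $p_{n-2,k}$, $p_{n-2,k-1}$, $p_{n-2,k-2}$, and reindex the second and third sums by $k\mapsto k+1$ and $k\mapsto k+2$. Each resulting sum is then evaluated by the elementary Bernstein identities
\[
\sum_k p_{m,k}(x)=1,\qquad \sum_k k\,p_{m,k}(x)=mx,\qquad \sum_k k(k-1)\,p_{m,k}(x)=m(m-1)x^2
\]
taken at $m=n-2$. Inserting $g_0(n)=\tfrac32$, $g_1(n)=-n$, $g_2(n)=n-2$, $h_0(n)=2(n-2)$ and collecting, the cases $j=0,1$ collapse --- by the very relations $2g_2(n)-h_0(n)=0$ and $g_2(n)+g_1(n)+2g_0(n)=1$ recorded before the lemma --- to $D^{M,2}_n(e_0;x)=1$ and $D^{M,2}_n(e_1;x)=x$. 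Substituting these into the first display gives $D^{M,2}_n((u-x);x)=x-x=0$ at once.

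The heart of the proof is the case $j=2$. After the same reindexing one must add the three products $g(x,n)S_1(x)+h(x,n)S_2(x)+g(1-x,n)S_3(x)$, where $S_1,S_2,S_3$ are the quadratics in $x$ produced by applying the Bernstein identities to $\sum_k(k+1)(k+2)\,p_{n-2,k}(x)$ and its two reindexed shifts. Since each factor is quadratic in $x$, this a priori yields a quartic, and the main obstacle is to see that the top coefficients annihilate. They do: the $x^2$-coefficients of $g(x,n)$, $h(x,n)$, $g(1-x,n)$ are $(n-2)$, $-2(n-2)$, $(n-2)$, summing to $0$, which kills the $x^4$ term, and a short check shows the $x^3$ term cancels as well. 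What survives is the quadratic $(n+2)(n+3)x^2+20x(1-x)-3$, so that
\[
D^{M,2}_n(e_2;x)=x^2+\frac{20x(1-x)-3}{(n+2)(n+3)}.
\]
Feeding this together with $D^{M,2}_n(e_1;x)=x$ and $D^{M,2}_n(e_0;x)=1$ into the second display yields $D^{M,2}_n((u-x)^2;x)=D^{M,2}_n(e_2;x)-x^2$, which is exactly the asserted expression. As a cheap consistency check, evaluating at $x=0$ leaves only $k=0,1,2$, giving $\tfrac32\cdot2+0-\tfrac12\cdot12=-3$, in agreement with the stated value $-3/((n+2)(n+3))$.
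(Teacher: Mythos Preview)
Your computation is correct and is exactly the standard route to these moment formulas: reduce to $D^{M,2}_n(e_j;x)$ for $j=0,1,2$ via linearity, evaluate the Beta integrals $(n+1)\int_0^1 p_{n,k}(u)u^j\,\textrm{d}u$, expand $p_{n,k}^{M,2}$ through the three shifted bases, reindex, and close with the Bernstein power sums. The cancellation of the quartic and cubic terms in the $j=2$ case is genuine and your explanation via the $x^2$-coefficients $(n-2)-2(n-2)+(n-2)=0$ of $g$, $h$, $g(1-\cdot)$ is the right reason. The endpoint check at $x=0$ is also fine.

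There is, however, nothing to compare against: the paper does not prove this lemma at all. It is quoted verbatim from Acu, Gupta and Tachev \cite{ana2019} (note the citation attached to the lemma heading), where the moments are computed for the operator $D^{M,2}_n$ in the first place. So your proposal supplies a full proof where the present paper merely imports the result; the approach you take is essentially the same direct computation carried out in \cite{ana2019}.
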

\begin{remark}
By an application of  lemma \ref{momentlemma}, we have
\begin{equation}\label{bdd2order}D^{M,2}_{n}((u-x)^2;x)\le \frac{C}{n+2}\varphi^2(x), x\in [0,1],
\end{equation} where $\varphi^2(x)=x(1-x)$ and $C$ is positive constant.
\end{remark}
Very recently,  Kajla and Acar \cite{KajlaandAcar} has applied this method to $\alpha-$ Bernstein operators. Motivated to study of various variants of these operators, we define the B\'{e}zier Variant of the operator $D^{M,2}_n(f;x)$ for a larger class, namely the class of functions of bounded variation.
For $\mu\ge 1$ and  $f\in L_B[0,1],$  we define B\'{e}zier variant of the operator $D^{M,2}_n(f;x)$ as
\begin{equation}\label{defoperator1}
D^{M,2}_{n,\mu}(f;x):=(n+1)\sum_{k=0}^{n}Q^{(\mu)}_{n,k}(x)\int_{0}^{1}p_{n,k}(u)f(u)\,\textrm{d}u,
\end{equation}
where
\begin{equation*}
Q^{(\mu)}_{n,k}(x)=(J_{n,k}(x))^{\mu}-(J_{n,k+1}(x))^{\mu}, J_{n,k}(x)=\sum_{j=k}^{n}K_{n,j}(x).
\end{equation*}
For $\mu=1,$ this family of B\'{e}zier operators yield the operators $D^{M,2}_{n,\mu}(f;x)$ studied by Acu et al. \cite{ana2019}.
Let $$W_{n,\mu}(x,u)=(n+1)\sum_{k=0}^nQ^{(\mu)}_{n,k}(x)p_{n,k}(u).$$
Then $D^{M,2}_{n,\mu}(f;x):=\int_{0}^{1}W_{n,\mu}(x,u)f(u)\textrm{d}u.$

\begin{lemma}
If $f\in C[0,1]$ then,  $\|D^{M,2}_{n}(f)\|\le \|f\|,$ where $\|.\|$ denotes the sup-norm on $[0,1].$
\end{lemma}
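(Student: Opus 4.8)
The plan is to reduce this uniform-norm estimate to a statement about the fundamental polynomials $p_{n,k}^{M,2}$ alone, since the inner integration against the Bernstein basis produces a clean constant. First I would start from the definition of $D^{M,2}_{n}(f;x)$, apply the triangle inequality, and bound $|f(u)|\le\|f\|$, using that $p_{n,k}(u)\ge 0$ on $[0,1]$:
\begin{equation*}
|D^{M,2}_{n}(f;x)|\le(n+1)\sum_{k=0}^{n}|p_{n,k}^{M,2}(x)|\int_{0}^{1}p_{n,k}(u)|f(u)|\,\textrm{d}u\le\|f\|\,(n+1)\sum_{k=0}^{n}|p_{n,k}^{M,2}(x)|\int_{0}^{1}p_{n,k}(u)\,\textrm{d}u.
\end{equation*}
The next ingredient is the elementary Beta-integral identity $\int_{0}^{1}p_{n,k}(u)\,\textrm{d}u=\frac{1}{n+1}$, obtained from $\int_{0}^{1}u^{k}(1-u)^{n-k}\,\textrm{d}u=\frac{k!\,(n-k)!}{(n+1)!}$ multiplied by $\binom{n}{k}$. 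Substituting this cancels the factor $(n+1)$ and leaves
\begin{equation*}
|D^{M,2}_{n}(f;x)|\le\|f\|\sum_{k=0}^{n}|p_{n,k}^{M,2}(x)|,
\end{equation*}
so the entire estimate comes down to controlling $\sum_{k=0}^{n}|p_{n,k}^{M,2}(x)|$ uniformly on $[0,1]$.

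I would first record the signed identity $\sum_{k=0}^{n}p_{n,k}^{M,2}(x)=1$: summing \eqref{fundpoly2} over $k$ and using the partition of unity of each shifted family, $\sum_{j}p_{n-2,j}(x)=1$, gives $\sum_{k}p_{n,k}^{M,2}(x)=g(x,n)+h(x,n)+g(1-x,n)$; and with the prescribed sequences $g_{0}(n)=\tfrac32,\ g_{1}(n)=-n,\ g_{2}(n)=n-2,\ h_{0}(n)=2(n-2)$ the coefficients of $x^{2}$ and of $x$ cancel while the constant term equals $1$, so this sum is identically $1$. If the $p_{n,k}^{M,2}$ were nonnegative, then $\sum_{k}|p_{n,k}^{M,2}(x)|=\sum_{k}p_{n,k}^{M,2}(x)=1$ and the lemma would follow at once.

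The main obstacle is precisely this passage from the signed sum to the absolute sum. Nonnegativity of each $p_{n,k}^{M,2}(x)$ on $[0,1]$ is the delicate point, because the factor $g(x,n)=(n-2)x^{2}-nx+\tfrac32$ is negative on an interior subinterval of $[0,1]$ for every $n\ge 3$; in particular $p_{n,0}^{M,2}(x)=g(x,n)(1-x)^{n-2}$ inherits that sign, so one cannot simply invoke positivity. Consequently the heart of the argument is a genuine sign analysis of the full three-term combination in \eqref{fundpoly2}, verifying that the contributions of $h(x,n)$ and of $g(1-x,n)$ compensate the negative part of $g(x,n)$ within each $p_{n,k}^{M,2}$; equivalently, I would try to show that the kernel $W_{n,1}(x,u)=(n+1)\sum_{k}p_{n,k}^{M,2}(x)p_{n,k}(u)$ does not change sign in $u$, so that $\int_{0}^{1}|W_{n,1}(x,u)|\,\textrm{d}u=\int_{0}^{1}W_{n,1}(x,u)\,\textrm{d}u=1$ and the bound $\le\|f\|$ follows directly from the kernel representation $D^{M,2}_{n}(f;x)=\int_{0}^{1}W_{n,1}(x,u)f(u)\,\textrm{d}u$. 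Everything up to this point is routine; the positivity/sign verification is where the real effort is concentrated.
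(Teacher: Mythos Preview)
The paper gives no proof of this lemma; it is stated bare, so there is nothing to compare your argument against. Your reduction is the natural one: the Beta integral $\int_{0}^{1}p_{n,k}(u)\,du=\tfrac{1}{n+1}$ collapses the estimate to $\sum_{k}|p_{n,k}^{M,2}(x)|$, or equivalently to $\int_{0}^{1}|W_{n,1}(x,u)|\,du$, and you correctly isolate the only nontrivial point, namely the sign of the modified basis under the parameter choice $g_{0}(n)=\tfrac32$, $g_{1}(n)=-n$, $g_{2}(n)=n-2$, $h_{0}(n)=2(n-2)$.

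Your suspicion is in fact decisive, and the proposed escape via kernel positivity does not rescue the statement. Since $p_{n,k}(0)=\delta_{k,0}$, one has
\[
W_{n,1}(x,0)=(n+1)\,p_{n,0}^{M,2}(x)=(n+1)\,g(x,n)(1-x)^{n-2},
\]
and $g\bigl(\tfrac12,n\bigr)=\tfrac{4-n}{4}<0$ for every $n\ge 5$; thus the kernel itself changes sign, so $\int_{0}^{1}|W_{n,1}(x,u)|\,du>\int_{0}^{1}W_{n,1}(x,u)\,du=1$ at such $x$. Equivalently, a linear operator on $C[0,1]$ of norm at most $1$ that fixes constants is necessarily positive, and $D_{n}^{M,2}$ is not. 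Hence the inequality $\|D_{n}^{M,2}(f)\|\le\|f\|$ with constant exactly $1$ cannot hold for these coefficients; the obstacle you flagged is a genuine gap, not a technicality. What the remainder of the paper actually uses is only a uniform bound $\|D_{n}^{M,2}(f)\|\le C\|f\|$ for some absolute $C$, which is a different estimate and would need its own argument.
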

\begin{lemma}\label{lemmabddbezier}
If $f\in C[0,1]$ then,  $\|D^{M,2}_{n,\mu}(f)\|\le \mu \|f\|.$
\end{lemma}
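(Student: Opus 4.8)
The plan is to bound the operator pointwise by pulling the sup-norm of $f$ out of the integral and then controlling the B\'{e}zier weights $Q^{(\mu)}_{n,k}(x)$ by the generalized basis functions $p^{M,2}_{n,k}(x)$. First I would record the Beta-integral identity $(n+1)\int_0^1 p_{n,k}(u)\,\mathrm{d}u=1$, coming from $\int_0^1 u^k(1-u)^{n-k}\,\mathrm{d}u=\frac{k!(n-k)!}{(n+1)!}$. Since the classical Bernstein basis $p_{n,k}(u)$ is nonnegative, for any $f\in C[0,1]$ one has $\left|\int_0^1 p_{n,k}(u)f(u)\,\mathrm{d}u\right|\le \frac{\|f\|}{n+1}$, whence
\begin{equation*}
|D^{M,2}_{n,\mu}(f;x)| \le \|f\|\sum_{k=0}^{n}\bigl|Q^{(\mu)}_{n,k}(x)\bigr|.
\end{equation*}
Thus the whole problem reduces to showing $\sum_{k=0}^{n}\bigl|Q^{(\mu)}_{n,k}(x)\bigr|\le \mu$ uniformly in $x\in[0,1]$.

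For the key step I would use the elementary inequality $|a^{\mu}-b^{\mu}|\le \mu\,|a-b|$, valid for $a,b\in[0,1]$ and $\mu\ge 1$ (an immediate consequence of the mean value theorem applied to $t\mapsto t^{\mu}$, whose derivative $\mu t^{\mu-1}$ is bounded by $\mu$ on $[0,1]$). Applying this with $a=J_{n,k}(x)$ and $b=J_{n,k+1}(x)$, and recalling that the single-summand difference of the partial sums satisfies $J_{n,k}(x)-J_{n,k+1}(x)=K_{n,k}(x)=p^{M,2}_{n,k}(x)$ (the identification forced by the reduction $Q^{(1)}_{n,k}=p^{M,2}_{n,k}$ at $\mu=1$), I obtain
\begin{equation*}
\bigl|Q^{(\mu)}_{n,k}(x)\bigr| = \bigl|(J_{n,k}(x))^{\mu}-(J_{n,k+1}(x))^{\mu}\bigr| \le \mu\,\bigl|p^{M,2}_{n,k}(x)\bigr|.
\end{equation*}
Summing over $k$ and invoking the partition-of-unity property $\sum_{k=0}^{n}p^{M,2}_{n,k}(x)=1$ (which is precisely the identity $D^{M,2}_{n}(e_0;x)=e_0(x)$ used to fix the coefficient sequences $g_i,h_i$), the right-hand side collapses to $\mu$, giving $\|D^{M,2}_{n,\mu}(f)\|\le \mu\|f\|$.

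The delicate point, and the step I expect to be the main obstacle, is the legitimacy of the mean value estimate, which presupposes $0\le J_{n,k+1}(x)\le J_{n,k}(x)\le 1$ for every $k$ and every $x\in[0,1]$, together with the replacement of $\sum_k\bigl|p^{M,2}_{n,k}(x)\bigr|$ by $\sum_k p^{M,2}_{n,k}(x)=1$. Both hinge on the nonnegativity of the generalized basis $p^{M,2}_{n,k}$. Unlike the classical Bernstein weights, these functions are designed to raise the approximation order to $O(n^{-2})$, so I would not take their nonnegativity for granted; I would verify it directly from the explicit form $p^{M,2}_{n,k}(x)=g(x,n)p_{n-2,k}(x)+h(x,n)p_{n-2,k-1}(x)+g(1-x,n)p_{n-2,k-2}(x)$ with the prescribed coefficients, so that $J_{n,k}(x)$ is decreasing in $k$ and lies in $[0,1]$. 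If this nonnegativity were to fail at some boundary values of $x$, the argument would have to be refined to absorb the extra $L_1$ mass of the weights at the cost of the constant; handling that case cleanly is where the real work would lie.
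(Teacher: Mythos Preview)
Your approach is essentially the same as the paper's: both hinge on the mean-value inequality $|a^{\mu}-b^{\mu}|\le\mu|a-b|$ for $a,b\in[0,1]$ to obtain $|Q^{(\mu)}_{n,k}(x)|\le\mu\,K_{n,k}(x)$, after which the bound follows from the $\mu=1$ case. The only cosmetic difference is that the paper packages the last step as $\|D^{M,2}_{n,\mu}(f)\|\le\mu\|D^{M,2}_{n}(f)\|\le\mu\|f\|$, quoting the preceding lemma, whereas you unpack that lemma inline via the Beta integral and the partition of unity $\sum_k p^{M,2}_{n,k}=1$; your concern about the nonnegativity of $p^{M,2}_{n,k}$ (needed so that $J_{n,k}\in[0,1]$ and decreases in $k$) is legitimate and is tacitly assumed in the paper's proof as well.
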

\begin{proof}
Using the inequality $|a^\mu-b^\mu|\le \mu |a-b|$ with $0\le a,b \le 1, \mu \ge 1$ and definition of $D^{M,2}_{n,\mu}(f),$ we get for $\mu\ge 1$
\begin{equation}\label{lemma3result}
0<[(J_{n,k}(x))^{\mu}-(J_{n,k+1}(x))^{\mu}]\le \mu [J_{n,k}(x)-J_{n,k+1}(x)]=\mu K_{n,k}(x)
\end{equation} Applying $D^{M,2}_{n,\mu}(f;x)$ and Lemma \ref{lemmabddbezier}, we get
$$\|D^{M,2}_{n,\mu}(f)\|\le \mu \|D^{M,2}_{n}(f)\|\le \mu \|f\|.$$
\end{proof}

\section{Direct result}
Now, we derive a direct result for B\'{e}zier operators in terms of Ditzian Totik modulus of smoothness $\omega_{\varphi}(f,t)$.
\begin{theorem}
 Let  $\varphi(x)=\sqrt{x(1-x)},$ $\mu\ge 1$ and  $x\in[0,1].$ If $f\in C[0,1]$ then
$$|D^{M,2}_{n,\mu}(f;x)-f(x)| \le C \omega_{\varphi}\left(f,\sqrt{\frac{1}{n+2}}\right),$$
where $C$ is any absolute constant.\end{theorem}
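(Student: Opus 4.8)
The plan is to run the standard Ditzian--Totik $K$-functional argument, exploiting the fact that the second moment estimate \eqref{bdd2order} carries the weight $\varphi^{2}(x)$. Recall the equivalent $K$-functional
$$K_{\varphi}(f,t)=\inf_{g}\left\{\|f-g\|+t\,\|\varphi g'\|\right\},$$
the infimum being taken over $g$ that are locally absolutely continuous with $\|\varphi g'\|<\infty$, together with the equivalence $C^{-1}\omega_{\varphi}(f,t)\le K_{\varphi}(f,t)\le C\,\omega_{\varphi}(f,t)$. Fixing such a $g$, I would split
$$|D^{M,2}_{n,\mu}(f;x)-f(x)|\le |D^{M,2}_{n,\mu}(f-g;x)|+|D^{M,2}_{n,\mu}(g;x)-g(x)|+|g(x)-f(x)|.$$
By Lemma \ref{lemmabddbezier} the first term is at most $\mu\|f-g\|$ and the third at most $\|f-g\|$, so both are controlled by $(\mu+1)\|f-g\|$; only the middle term requires real work.

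For the middle term I would write $g(u)-g(x)=\int_{x}^{u}g'(v)\,\mathrm{d}v$. Since $D^{M,2}_{n,\mu}$ is positive and reproduces constants ($D^{M,2}_{n,\mu}(e_{0};x)=1$, because the $Q^{(\mu)}_{n,k}$ telescope and $(n+1)\int_{0}^{1}p_{n,k}=1$),
$$|D^{M,2}_{n,\mu}(g;x)-g(x)|\le D^{M,2}_{n,\mu}\left(\left|\int_{x}^{u}g'(v)\,\mathrm{d}v\right|;x\right).$$
Next I would insert the weight via $\left|\int_{x}^{u}g'(v)\,\mathrm{d}v\right|\le \|\varphi g'\|\left|\int_{x}^{u}\frac{\mathrm{d}v}{\varphi(v)}\right|$ and invoke the elementary Ditzian--Totik inequality $\left|\int_{x}^{u}\frac{\mathrm{d}v}{\varphi(v)}\right|\le \frac{2|u-x|}{\varphi(x)}$, valid for all $x,u\in[0,1]$ with $\varphi(v)=\sqrt{v(1-v)}$, to obtain
$$|D^{M,2}_{n,\mu}(g;x)-g(x)|\le \frac{2\|\varphi g'\|}{\varphi(x)}\,D^{M,2}_{n,\mu}(|u-x|;x).$$

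The decisive step is the estimate of $D^{M,2}_{n,\mu}(|u-x|;x)$. Because the integrand is nonnegative and $0<Q^{(\mu)}_{n,k}(x)\le \mu K_{n,k}(x)$ by \eqref{lemma3result}, one has $D^{M,2}_{n,\mu}(\psi;x)\le \mu\,D^{M,2}_{n}(\psi;x)$ for nonnegative $\psi$; combining this with the Cauchy--Schwarz inequality and the moment bound \eqref{bdd2order} gives
$$D^{M,2}_{n,\mu}(|u-x|;x)\le \left(D^{M,2}_{n,\mu}((u-x)^{2};x)\right)^{1/2}\le \left(\frac{\mu C}{n+2}\right)^{1/2}\varphi(x).$$
Crucially, the factor $\varphi(x)$ produced here cancels the $1/\varphi(x)$ from the previous step, leaving $|D^{M,2}_{n,\mu}(g;x)-g(x)|\le 2\sqrt{\mu C}\,\|\varphi g'\|\,\sqrt{1/(n+2)}$ with no surviving weight. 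Assembling the three pieces,
$$|D^{M,2}_{n,\mu}(f;x)-f(x)|\le (\mu+1)\|f-g\|+2\sqrt{\mu C}\,\sqrt{\tfrac{1}{n+2}}\,\|\varphi g'\|,$$
and taking the infimum over $g$ identifies the right-hand side with a multiple of $K_{\varphi}(f,\sqrt{1/(n+2)})$, hence of $\omega_{\varphi}(f,\sqrt{1/(n+2)})$, which is the claim (with the constant depending only on $\mu$ and on the absolute constant in \eqref{bdd2order}). The only genuinely delicate point is the weight-matching just described: it succeeds precisely because \eqref{bdd2order} carries $\varphi^{2}(x)$ rather than a constant, so that the $\varphi(x)$ coming from the moment and the $1/\varphi(x)$ coming from the integral inequality annihilate each other uniformly in $x$.
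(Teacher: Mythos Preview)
Your argument is correct and follows essentially the same route as the paper: the three-term split via a smooth $g$, the representation $g(u)-g(x)=\int_x^u g'$, the weighted integral inequality $\bigl|\int_x^u\varphi^{-1}\bigr|\le C|u-x|/\varphi(x)$, Cauchy--Schwarz on the first moment, the second-moment bound \eqref{bdd2order} (you are in fact a bit more careful than the paper in explicitly inserting the comparison $D^{M,2}_{n,\mu}\le\mu D^{M,2}_{n}$ on nonnegative integrands before applying \eqref{bdd2order}), and finally the $K_\varphi\sim\omega_\varphi$ equivalence. The only cosmetic discrepancy is your constant $2$ versus the paper's $2\sqrt{2}$ in the integral inequality, which is immaterial since it is absorbed into $C$.
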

\begin{proof}
We know that $g(t)=g(x)+\int\limits_x^tg'(u)\textrm{d}u.$ So
\begin{equation}\label{eqn11}|D^{M,2}_{n,\mu}(g;x)-g(x)|=\left|D^{M,2}_{n,\mu}\left(\int\limits_x^tg'(u)\textrm{d}u,x\right)\right|.\end{equation}
For any  $x,t\in(0,1),$ we have
\begin{equation}\label{eqn12}\left|\int\limits_x^tg'(u)\textrm{d}u\right|\le \|\varphi g'\|\left|\int\limits_x^t\frac{1}{\varphi(u)}\textrm{d}u\right|.\end{equation}
Therefore,
\begin{eqnarray} \label{eqn13}\left|\int\limits_x^t\frac{1}{\varphi(u)}\textrm{d}u\right|&=&\left|\int\limits_x^t\frac{1}{\sqrt{u(1-u)}}\textrm{d}u\right|\nonumber\\
&\le &\left|\int\limits_x^t\left(\frac{1}{\sqrt{u}}+\frac{1}{\sqrt{1-u}}\right)\textrm{d}u\right|\nonumber\\
&\le & 2\left(|\sqrt{t}-\sqrt{x}|+|\sqrt{1-t}-\sqrt{1-x}|\right)\nonumber\\
&=&2|t-x|\left(\frac{1}{\sqrt{t}+\sqrt{x}}+\frac{1}{\sqrt{1-t}+\sqrt{1-x}}\right)\nonumber\\
&<&2|t-x|\left(\frac{1}{\sqrt{x}}+\frac{1}{\sqrt{1-x}}\right)\nonumber\\
&<&\frac{2\sqrt{2}|t-x|}{\varphi(x)}.
\end{eqnarray}
Combining (\ref{eqn11})-(\ref{eqn13}) and using Cauchy-Schwarz inequality, we have that
\begin{eqnarray*}\label{eqn14}|D^{M,2}_{n,\mu}(g;x)-g(x)| &\le& 2\sqrt{2}\|\varphi g'\|\varphi^{-1}(x)D^{M,2}_{n,\mu}(|t-x|;x)\\
&\le& 2 \sqrt{2}\|\varphi g'\|\varphi^{-1}(x)\left(D^{M,2}_{n,\mu}((t-x)^2;x)\right)^{1/2}.
\end{eqnarray*}
Now using inequality (\ref{bdd2order}), we obtain
\begin{equation}\label{eqn14}
|D^{M,2}_{n,\mu}(g;x)-g(x)|<C\sqrt{\frac{1}{(n+2)}}\|\varphi g'\|.\end{equation}
Using Lemma \ref{momentlemma} and inequality (\ref{eqn14}), we obtain
\begin{eqnarray}\label{eqn15}|D^{M,2}_{n,\mu}(f;x)-f| &\le& |D^{M,2}_{n,\mu}(f-g;x)|+|f-g|+|D^{M,1}_{n,\mu}(g;x)-g(x)|\nonumber\\
&\le  & C \left(\|f-g\|+\sqrt{\frac{1}{(n+2)}}\|\varphi g'\|\right).
\end{eqnarray}
Taking infimum on both sides of (\ref{eqn15}) over all $g \in W^2_{\phi}$, we reach to
\begin{equation*}\label{eqn16}|D^{M,2}_{n,\mu}(f;x)-f|\le C K_{\varphi}\left(f,\sqrt{\frac{1}{(n+2)}}\right)
\end{equation*}
Using relation  $K_{\varphi}(f,t) \sim \omega_{\varphi}(f,t),$  we get the required result.
\end{proof}
Ozarslan and Aktuglu \cite{ozarslan} is considered the Lipschitz-type space with two parameters $\alpha_1\ge 0,\alpha_2>0$, which is defined as
$$Lip_M^{(\alpha_1,\alpha_2)}(\zeta)=\left\{f\in C[0,1]:|f(t)-f(x)|\le C_0\frac{|t-x|^\zeta}{(t+\alpha_1x^2+\alpha_2x)^{\zeta/2}}:t\in[0,1], x\in (0,1)\right\},$$
where $0<\zeta\le 1,$ and $C_0$ is any absolute constant.
\begin{theorem}
Let $f\in Lip_M^{(\alpha_1,\alpha_2)}(\zeta).$ Then for all $x\in (0,1]$, there holds:
\begin{equation*}\label{eqn17}|D^{M,2}_{n,\mu}(f;x)-f|\le C \left(\frac{\mu \varphi^2(x)}{(n+2)(\alpha_1x^2+\alpha_2x)}\right)^{\frac{\zeta}{2}},
\end{equation*} where $C$ is any absolute constant.
\end{theorem}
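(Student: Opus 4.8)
The plan is to exploit the fact that the B\'{e}zier operator reproduces constants, so that the error becomes an average of the increments $f(u)-f(x)$ against the positive kernel $W_{n,\mu}(x,u)$. First I would record that $(n+1)\int_0^1 p_{n,k}(u)\,\mathrm{d}u=1$ for every $k$, while the weights $Q^{(\mu)}_{n,k}(x)$ telescope, so that $D^{M,2}_{n,\mu}(e_0;x)=1$. Hence
$$D^{M,2}_{n,\mu}(f;x)-f(x)=\int_0^1 W_{n,\mu}(x,u)\bigl(f(u)-f(x)\bigr)\,\mathrm{d}u,$$
and, since $W_{n,\mu}(x,u)\ge 0$, this gives $|D^{M,2}_{n,\mu}(f;x)-f(x)|\le D^{M,2}_{n,\mu}\bigl(|f(t)-f(x)|;x\bigr)$.

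Next I would bring in the Lipschitz-type hypothesis. For $f\in Lip_M^{(\alpha_1,\alpha_2)}(\zeta)$, $u\in[0,1]$ and $x\in(0,1]$ one has $u+\alpha_1x^2+\alpha_2x\ge \alpha_1x^2+\alpha_2x$, so the defining inequality of the space yields
$$|f(u)-f(x)|\le C_0\,\frac{|u-x|^{\zeta}}{(\alpha_1x^2+\alpha_2x)^{\zeta/2}}.$$
Substituting this into the previous estimate reduces the problem to controlling the fractional absolute moment $D^{M,2}_{n,\mu}\bigl(|t-x|^{\zeta};x\bigr)$.

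To handle that moment I would apply H\"{o}lder's inequality, with conjugate exponents $2/\zeta$ and $2/(2-\zeta)$, to the positive linear functional $g\mapsto D^{M,2}_{n,\mu}(g;x)$ (a probability average because $D^{M,2}_{n,\mu}(e_0;x)=1$). This gives
$$D^{M,2}_{n,\mu}\bigl(|t-x|^{\zeta};x\bigr)\le \Bigl(D^{M,2}_{n,\mu}\bigl((t-x)^2;x\bigr)\Bigr)^{\zeta/2}.$$
The remaining ingredient is the second moment of the B\'{e}zier variant, and transferring it from the case $\mu=1$ is where inequality (\ref{lemma3result}), namely $Q^{(\mu)}_{n,k}(x)\le \mu K_{n,k}(x)$, enters: it yields $D^{M,2}_{n,\mu}\bigl((t-x)^2;x\bigr)\le \mu\,D^{M,2}_{n}\bigl((t-x)^2;x\bigr)$, which (\ref{bdd2order}) bounds by $\tfrac{C\mu}{n+2}\varphi^2(x)$.

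Assembling the three estimates produces
$$|D^{M,2}_{n,\mu}(f;x)-f(x)|\le \frac{C_0}{(\alpha_1x^2+\alpha_2x)^{\zeta/2}}\left(\frac{C\mu\,\varphi^2(x)}{n+2}\right)^{\zeta/2}=C\left(\frac{\mu\,\varphi^2(x)}{(n+2)(\alpha_1x^2+\alpha_2x)}\right)^{\zeta/2},$$
which is exactly the asserted bound. I expect the only genuinely delicate point to be the passage from the moment of $D^{M,2}_{n}$ to that of its B\'{e}zier variant through (\ref{lemma3result}); the reproduction of constants, the H\"{o}lder step, and the dropping of the harmless $u$ in the denominator are all routine. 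One detail worth verifying beforehand is that $K_{n,k}(x)$ in (\ref{lemma3result}) is precisely the fundamental weight $p^{M,2}_{n,k}(x)$, so that the comparison operator on the right-hand side really is $D^{M,2}_{n}$ and (\ref{bdd2order}) applies verbatim.
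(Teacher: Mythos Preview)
Your proposal is correct and follows essentially the same route as the paper: use reproduction of constants to reduce to $D^{M,2}_{n,\mu}(|f(t)-f(x)|;x)$, invoke the Lipschitz-type bound (dropping the $u$ in the denominator), apply H\"older with exponents $2/\zeta$ and $2/(2-\zeta)$ to pass to the second moment, and then bound $D^{M,2}_{n,\mu}((t-x)^2;x)$ by $\mu D^{M,2}_{n}((t-x)^2;x)$ via (\ref{lemma3result}) and by (\ref{bdd2order}). The only cosmetic difference is that the paper applies H\"older first (in two stages, on the inner integral and then on the outer sum) and the Lipschitz condition afterwards, whereas you apply them in the opposite order directly to the full probability measure; the resulting estimate is identical.
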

\begin{proof}
Using Lemma \ref{momentlemma} and (\ref{lemma3result}) and H\"{o}lder's inequality with $p=\frac{2}{\zeta}$ and $p=\frac{2}{2-\zeta},$ we get
\begin{align*}
 &|D^{M,2}_{n,\mu}(f;x)-f(x)|\\
 &\le (n+1)\sum_{k=0}^{n}Q^{(\mu)}_{n,k}(x)\int_{0}^{1}p_{n,k}(u)|f(u)-f(x)|\,\textrm{d}u\\
 &\le (n+1)\sum_{k=0}^{n}Q^{(\mu)}_{n,k}(x)\left(\int_{0}^{1}p_{n,k}(u)|f(u)-f(x)|^{\frac{2}{\zeta}}\,\textrm{d}u\right)^{\frac{\zeta}{2}}\\
 &\le  \left[(n+1)\sum_{k=0}^{n}Q^{(\mu)}_{n,k}(x)\int_{0}^{1}p_{n,k}(u)|f(u)-f(x)|^{\frac{2}{\zeta}}\,\textrm{d}u\right]^{\frac{\zeta}{2}}\times\\
&\left[(n+1)\sum_{k=0}^{n}Q^{(\mu)}_{n,k}(x)\int_{0}^{1}p_{n,k}(u)\,\textrm{d}u\right]^{\frac{2-\zeta}{2}}\\
&=\left[(n+1)\sum_{k=0}^{n}Q^{(\mu)}_{n,k}(x)\int_{0}^{1}p_{n,k}(u)|f(u)-f(x)|^{\frac{2}{\zeta}}\,\textrm{d}u\right]^{\frac{\zeta}{2}}\\
&\le C\left((n+1)\sum_{k=0}^{n}Q^{(\mu)}_{n,k}(x)\int_{0}^{1}p_{n,k}(u)\frac{(u-x)^2}{(u+\alpha_1x^2+\alpha_2x)}\textrm{d}u\right)^{\frac{\zeta}{2}}\\
&\le \frac{C}{(\alpha_1x^2+\alpha_2x)^{\frac{\zeta}{2}}}  \left((n+1)\sum_{k=0}^{n}Q^{(\mu)}_{n,k}(x)\int_{0}^{1}p_{n,k}(u)(u-x)^2\textrm{d}u\right)^{\frac{\zeta}{2}}\\
&\le \frac{C}{(\alpha_1x^2+\alpha_2x)^{\frac{\zeta}{2}}} [D^{M,2}_{n,\mu}((u-x)^2;x)]^{\frac{\zeta}{2}}\\
&\le C \left(\frac{\mu \varphi^(x)}{(n+2)(\alpha_1x^2+\alpha_2x)}\right)^{\frac{\zeta}{2}}.
\end{align*}
Hence, the desired result follows.
\end{proof}
\section{Rate of convergence}
If we define $$\kappa_{n,\mu}(x,y)=\int\limits_0^yW_{n,k}(x,t)\textrm{d}t.$$ Then, it is obvious that $\kappa_{n,\mu}(x,1)=1$.
\begin{lemma}\label{lem3}
Let $x\in (0,1)$ and $C>2$ then for sufficiently large $n$ we have
$$\kappa_{n,\mu}(x,y)=\int\limits_0^yW_{n,\mu}(x,t)dt\le \frac{C\mu x(1-x)}{n(x-y)^2},0<y<x$$
$$1-\kappa_{n,\mu}(x,z)=\int\limits_z^1W_{n,\mu}(x,t)dt\le \frac{C\mu x(1-x)}{n(z-x)^2},x<z<1.$$
\end{lemma}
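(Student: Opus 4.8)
The plan is to establish both tail estimates by the classical device of comparing the truncated kernel integral against the full second moment, whose size is already controlled. The underlying observation is that on the part of $[0,1]$ lying a fixed distance from $x$, the quadratic weight $(t-x)^2$ is bounded below by a positive constant, so inserting this factor only enlarges the integrand and converts the truncated mass of the kernel into a fraction of the second moment.

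First I would pass from the B\'{e}zier kernel to the kernel of the base operator $D^{M,2}_n$. By the pointwise inequality (\ref{lemma3result}) one has $Q^{(\mu)}_{n,k}(x)\le \mu K_{n,k}(x)$, and since $K_{n,k}(x)=p_{n,k}^{M,2}(x)$ is exactly the weight of the case $\mu=1$, this yields $W_{n,\mu}(x,t)\le \mu\, W_{n,1}(x,t)$ pointwise in $t$. Integrating $(t-x)^2$ against this bound then gives
$$\int_0^1 (t-x)^2 W_{n,\mu}(x,t)\,dt \le \mu\, D^{M,2}_n((u-x)^2;x) \le \frac{C\mu}{n+2}\,\varphi^2(x),$$
the last step being (\ref{bdd2order}). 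This single second-moment estimate drives both halves of the lemma.

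Next, for the first inequality I would fix $0<y<x$ and observe that for every $t\in(0,y)$ we have $0<x-y<x-t$, whence $1\le (x-t)^2/(x-y)^2$. Multiplying the integrand of $\kappa_{n,\mu}(x,y)$ by this factor and enlarging the range of integration from $(0,y)$ to $(0,1)$ produces
$$\kappa_{n,\mu}(x,y)=\int_0^y W_{n,\mu}(x,t)\,dt \le \frac{1}{(x-y)^2}\int_0^1 (t-x)^2 W_{n,\mu}(x,t)\,dt \le \frac{C\mu\,\varphi^2(x)}{(n+2)(x-y)^2},$$
and then $n+2>n$ together with $\varphi^2(x)=x(1-x)$ gives the asserted bound. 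The second inequality is symmetric: for $x<z<1$ and $t\in(z,1)$ one has $0<z-x<t-x$, so $1\le (t-x)^2/(z-x)^2$, and the identical chain of estimates applied to $\int_z^1 W_{n,\mu}(x,t)\,dt = 1-\kappa_{n,\mu}(x,z)$ yields the result.

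I expect no serious obstacle in the truncation argument itself, which is routine once the second moment is in hand. The point demanding care is the reduction $W_{n,\mu}\le \mu W_{n,1}$: I must check that the telescoping identity $J_{n,k}-J_{n,k+1}=K_{n,k}$ together with $|a^\mu-b^\mu|\le\mu|a-b|$ indeed transfers the $O(\varphi^2(x)/n)$ moment bound from the base operator to the B\'{e}zier variant at the cost of only the factor $\mu$, so that the constant $C$ stays independent of $n$, $x$, and $\mu$.
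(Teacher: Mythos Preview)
Your argument is correct and matches the paper's own justification, which appears in the Remark immediately following the lemma: insert the factor $(t-x)^2/(y-x)^2\ge 1$ on the truncated range, enlarge to the full integral, pass from $D^{M,2}_{n,\mu}$ to $\mu D^{M,2}_n$ via $Q^{(\mu)}_{n,k}\le\mu K_{n,k}$, and apply the second-moment bound (\ref{bdd2order}). You are slightly more careful than the paper (which writes an equality where an inequality is meant after truncation and treats only the first estimate explicitly), but the route is the same.
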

\begin{remark} We have
\begin{align*}\kappa_{n,\mu}(x,y)&=\int\limits_0^yW_{n,\mu}(x,t)dt \le \int\limits_0^yW_{n,\mu}(x,t)\frac{(t-x)^2}{(y-x)^2}dt\\&=\frac{D^{M,2}_{n,\mu}((t-x)^2;x)}{(y-x)^2}\le
 \frac{\mu D^{M,2}_n((t-x)^2;x)}{(y-x)^2}\le\frac{\mu }{n+2}.\frac{\varphi^2(x)}{(y-x)^2}.\end{align*}
\end{remark}
 Now, we discuss the approximation properties of functions having derivative of bounded variation on $[0,1]$. Let $\mathbb{B}[0,1]$ denote the class of differentiable functions $g$ defined on $[0,1]$, whose derivative $g'$ is of bounded variation on $[0,1]$. The functions $g\in \mathbb{B}[0,1]$ is expressed as $g(x)=\int_0^x h(t)dt+g(0)$, where $h\in \mathbb{B}[0,1],$ i.e., $h$ is a function of bounded variation on $[0,1].$

\begin{theorem}
Let $f\in\mathbb{B}[0,1]$ then for $\mu\ge 1, 0<x<1 $ a and sufficiently large $n$ we have
\begin{align*}\left|D^{M,2}_{n,\mu}(f;x)-f(x)\right|&\le \left(\frac{1}{\mu+1}|f'(x+)+\mu f'(x-)|+|f'(x+)-f'(x-)|\right)\frac{\mu}{n+2}\varphi^2(x)\\
&+\frac{\mu}{n+2}.\frac{\varphi^2(x)}{x^2}\sum_{k=1}^{\sqrt{n}}\left(\bigvee_{x-\frac{x}{k}}^x(f')_x\right)
+\frac{x}{\sqrt{n}}\left(\bigvee_{x}^{x+\frac{1-x}{\sqrt{n}}}(f')_x\right) \left(\bigvee_{x-\frac{x}{k}}^x(f')_x\right)\\
&+\frac{\mu}{n+2}.\frac{\varphi^2(x)}{1-x}\sum_{k=1}^{\sqrt{n}}\left(\bigvee_x^{x+\frac{1-x}{k}}(f')_x\right)
+\frac{1-x}{\sqrt{n}}\left(\bigvee_{x}^{x+\frac{1-x}{\sqrt{n}}}(f')_x\right).
    \end{align*}
\[(f')_x(t)=\left\{
\begin{array}{lll}
f'(t)-f'(x-),  & \hbox{$0\le t <x$} \\
0,& \hbox{$t=x$}\\
f'(t)-f'(x+),& \hbox{$x<t\le1$.}
\end{array}\right.\]
\end{theorem}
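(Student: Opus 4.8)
The plan is to start from the integral representation $D^{M,2}_{n,\mu}(f;x)-f(x)=\int_0^1 W_{n,\mu}(x,u)\,(f(u)-f(x))\,du$ and to write $f(u)-f(x)=\int_x^u f'(t)\,dt$. The central device is to decompose $f'$ around the point $x$ as
$$f'(t)=(f')_x(t)+\tfrac12\bigl(f'(x+)+f'(x-)\bigr)+\tfrac12\bigl(f'(x+)-f'(x-)\bigr)\operatorname{sgn}(t-x)+\delta_x(t),$$
where $(f')_x$ is the auxiliary function defined in the statement and $\delta_x$ is supported on the single point $t=x$, hence integrates to zero against $W_{n,\mu}$. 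Substituting this into the double integral splits the error into a jump part, coming from the constant and $\operatorname{sgn}$ terms, and a main part carried by $(f')_x$.

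For the jump part, integrating the constant and sign terms in $t$ from $x$ to $u$ collapses the contribution to $f'(x+)\int_x^1(u-x)W_{n,\mu}(x,u)\,du+f'(x-)\int_0^x(u-x)W_{n,\mu}(x,u)\,du$. Here the essential feature of the B\'{e}zier variant enters: unlike the symmetric case $\mu=1$, the one-sided first moments $\int_x^1(u-x)W_{n,\mu}$ and $\int_0^x(u-x)W_{n,\mu}$ are unequal, and their ratio is governed by $\mu$. Evaluating them and estimating via \eqref{bdd2order} produces the first line of the bound, the asymmetry of $W_{n,\mu}$ being responsible both for the coefficient $\frac{1}{\mu+1}$ and for the combination $f'(x+)+\mu f'(x-)$, together with the jump term $|f'(x+)-f'(x-)|$.

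The main part $\int_0^1 W_{n,\mu}(x,u)\bigl(\int_x^u (f')_x(t)\,dt\bigr)du$ I would split as $\int_0^x+\int_x^1$; the point $t=x$ contributes nothing since $(f')_x(x)=0$. On the left piece, integrating by parts in $u$ against $\kappa_{n,\mu}(x,\cdot)$ and using $\kappa_{n,\mu}(x,0)=0$ together with the vanishing of $\int_x^u(f')_x$ at $u=x$ reduces it to $-\int_0^x \kappa_{n,\mu}(x,u)\,(f')_x(u)\,du$, which is controlled through $|(f')_x(u)|\le \bigvee_u^x (f')_x$. I would then partition $[0,x)$ into a far block $[0,x-x/\sqrt n]$ and a near block $[x-x/\sqrt n,x)$, treating the right piece $(x,1]$ symmetrically with $1-\kappa_{n,\mu}$ in place of $\kappa_{n,\mu}$. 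On the near block one bounds the variation directly, producing the $\frac{x}{\sqrt n}$ and $\frac{1-x}{\sqrt n}$ terms; on the far block one invokes the decay estimates of Lemma \ref{lem3}. A further slicing of the far block into the intervals $[x-\frac{x}{k},x-\frac{x}{k+1}]$, combined with the $(x-y)^{-2}$ weight and a summation-by-parts, rearranges the contribution into $\frac{\mu}{n+2}\cdot\frac{\varphi^2(x)}{x^2}\sum_{k=1}^{\sqrt n}\bigvee_{x-x/k}^x(f')_x$ and its right-hand analogue.

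The hard part will be the far-block accounting: choosing the slicing $[x-\frac{x}{k},x-\frac{x}{k+1}]$, applying Lemma \ref{lem3} on each slice, and performing the summation-by-parts that converts the weighted kernel estimate into the clean sum of nested variations $\bigvee_{x-x/k}^x(f')_x$, all while keeping the constant uniform in $x\in(0,1)$. The jump part and the near-block estimates are comparatively routine once the decomposition of $f'$ and the integration by parts are in place.
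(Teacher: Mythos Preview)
Your treatment of the main term carried by $(f')_x$ --- integration by parts against $\kappa_{n,\mu}$ and $1-\kappa_{n,\mu}$, the near/far splitting at $x-x/\sqrt{n}$ and $x+(1-x)/\sqrt{n}$, and the use of Lemma~\ref{lem3} on the far blocks --- matches the paper's argument essentially line for line. (On the far block the paper does not do a summation by parts; it makes the substitution $u=x-x/t$, so the integral $\int_0^{x-x/\sqrt n}\bigl(\bigvee_u^x(f')_x\bigr)(x-u)^{-2}\,du$ becomes $x^{-1}\int_1^{\sqrt n}\bigl(\bigvee_{x-x/t}^x(f')_x\bigr)\,dt$, which is then bounded by the sum over $k$.)

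The jump part is where you diverge, and there is a gap. You use the symmetric decomposition with constant $\tfrac12(f'(x+)+f'(x-))$ and plain $\operatorname{sgn}(t-x)$, and then propose to recover the asymmetric coefficient $\tfrac{1}{\mu+1}|f'(x+)+\mu f'(x-)|$ by \emph{evaluating} the one-sided first moments $\int_x^1(u-x)W_{n,\mu}\,du$ and $\int_0^x(u-x)W_{n,\mu}\,du$ and reading off their $\mu$-dependent ratio. The paper never computes these one-sided moments, and the tools available (Lemma~\ref{momentlemma}, Lemma~\ref{lem3}, inequality~\eqref{bdd2order}) do not give them. Instead the paper builds the $\mu$-weights into the decomposition from the outset, writing
\[
f'(t)=\frac{f'(x+)+\mu f'(x-)}{\mu+1}+(f')_x(t)+\frac{f'(x+)-f'(x-)}{2}\Bigl(\operatorname{sgn}(t-x)+\tfrac{\mu-1}{\mu+1}\Bigr)+\delta_x(t)\,[\cdots],
\]
so that the constant piece is already $\frac{f'(x+)+\mu f'(x-)}{\mu+1}$ and is then bounded via the \emph{full} first moment $D^{M,2}_{n,\mu}((u-x);x)$ through Cauchy--Schwarz and the second-moment estimate; the shifted sign term is handled the same way. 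Your route, as written, would either require an independent asymptotic computation of the one-sided moments --- genuine extra work not supplied by the paper --- or would only deliver a cruder jump bound of the form $(|f'(x+)|+|f'(x-)|)$ times a moment, not the specific combination in the statement.
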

\begin{proof}
  As we know that $D^{M,2}_{n,\mu}(1;x)=1.$ Therefore, we have
\begin{eqnarray}\label{eqn0000}
D^{M,2}_{n,\mu}(f;x)-f(x)&=&\int\limits_0^1W_{n,\mu}(x,u)(f(u)-f(x))\,\textrm{d}u\nonumber\\
&=&\int\limits_0^1W_{n,\mu}(x,u)\int\limits_x^uf'(t)\,dt\,\textrm{d}u
\end{eqnarray}
Since $f\in \mathbb{B}[0,1]$, we may write
\begin{eqnarray}\label{eqn1111}
f'(t)&=&\frac{f'(x+)+\mu f'(x-)}{\mu+1}+(f')_x(t)+\frac{f'(x+)- f'(x-)}{2}\left(\mbox{sign}(t-x)+\frac{\mu-1}{\mu+1}\right)\nonumber\\&+&\delta_x(t)
\left(f'(t)-\frac{f'(x+)+\mu f'(x-)}{2}\right)
\end{eqnarray} where
\[\mbox{sign}(t)=\left\{
\begin{array}{lll}
1,  & \hbox{$t>0$} \\
0,& \hbox{$t=0$}\\
-1,& \hbox{$t<0$.}
\end{array}\right.\]
and
\[\delta_x(t)=\left\{
\begin{array}{ll}
1,  & \hbox{$t=x$} \\
0,& \hbox{$t\neq x$.}
\end{array}\right.\]
Putting  the value of $f'(t)$ form (\ref{eqn1111}) in \ref{eqn0000}, we get estimates corresponding to four terms of (\ref{eqn1111}), say $K_1, K_2, K_3$ and $K_4$ respectively. Obviously,
$$K_4=\int\limits_0^1\left(\int\limits_x^u  \left(f'(t)-\frac{f'(x+)+\mu f'(x-)}{2}\right) \delta_x(t)dt\right) W_{n,\mu}(x,u)\textrm{d}u=0.$$
Now,
Using Cauchy's Schwarz inequality and Lemma \ref{lem3}, we obtain
\begin{align*}
K_1=\int\limits_0^1\left(\int\limits_x^u \frac{f'(x+)+\mu f'(x-)}{\mu+1}dt\right)W_{n,\mu}(x,u)\textrm{d}u&=\frac{f'(x+)+\mu f'(x-)}{\mu+1}\int\limits_0^1(u-x)W_{n,\mu}(x,u)\textrm{d}u\\
&=\frac{f'(x+)+\mu f'(x-)}{\mu+1}D^{M,1}_{n,\mu}((u-x);x)\\
&\le \frac{1}{\sqrt{n+2}} \frac{f'(x+)+\mu f'(x-)}{\mu+1}\varphi(x).
\end{align*}
Next, again using Cauchy's Schwarz inequality and Lemma \ref{lem3}, we obtain
\begin{align*}K_3&=\int\limits_0^1\left(\int\limits_x^u \left(\frac{f'(x+)- f'(x-)}{2}\right)\left(\mbox{sign}(t-x)+\frac{\mu-1}{\mu+1}\right)dt\right)
W_{n,\mu}(x,u)\textrm{d}u\\
&=\left(\frac{f'(x+)- f'(x-)}{2}\right)\Bigg[-\int\limits_0^x\left(\int\limits_u^x \left(\mbox{sign}(t-x)+\frac{\mu-1}{\mu+1}\right)dt\right)W_{n,\mu}(x,u)\textrm{d}u\\
&+\int\limits_x^1\left(\int\limits_x^u \left(\mbox{sign}_{\mu}(t-x)+\frac{\mu-1}{\mu+1}\right)dt\right)
W_{n,\mu}(x,u)\textrm{d}u\Bigg]\\
&\le \left|f'(x+)- f'(x-)\right|\int\limits_0^1|u-x|W_{n,\mu}(x,u)\textrm{d}u\\
&\le \left|f'(x+)- f'(x-)\right|D^{M,1}_{n,\mu}(|u-x|;x)\\
&\le \frac{1}{\sqrt{n+2}} \left|f'(x+)- f'(x-)\right|\varphi(x).
\end{align*}
Now, we estimate $K_2$ as follows:
\begin{align*}
K_2=\int\limits_0^1\left(\int\limits_x^u (f')_x(t)dt\right)W_{n,\mu}(x,u)\textrm{d}u&=\int\limits_0^x\left(\int\limits_x^u (f')_x(t)dt\right) W_{n,\mu}(x,u)\textrm{d}u\\
&+\int\limits_x^1\left(\int\limits_x^u (f')_x(t)dt\right)
W_{n,\mu}(x,u)\textrm{d}u\\
&=K_5+K_6,\mbox{say}.
\end{align*}
Using Lemma \ref{lem3} and definition of $\kappa_{n,\mu}(x,u),$ we may write
\begin{align*}K_5&=\int\limits_0^x\left(\int\limits_x^u (f')_x(t)dt\right) \frac{d}{\textrm{d}u}\kappa_{n,\mu}(x,u)\textrm{d}u\\
\end{align*}
  Integrating by parts, we obtain
  \begin{align*}|K_5|&\le \int\limits_0^x|(f')_x(u)|\kappa_{n,\mu}(x,u)\textrm{d}u\\
&\le \int\limits_0^{x-x/\sqrt{n}}|(f')_x(u)|\kappa_{n,\mu}(x,u)\textrm{d}u+\int\limits_{x-x/\sqrt{n}}^x|(f')_x(u)|\kappa_{n,\mu}(x,u)\textrm{d}u\\
&=K_7+K_8,\mbox{say}.
\end{align*}
  In view of facts $(f')_x(x)=0$ and $\kappa_{n,\mu}(x,u)\le 1,$  we get
  \begin{align*}K_8&= \int\limits_0^x|(f')_x(u)-(f')_x(x)|\kappa_{n,\mu}(x,u)\textrm{d}u\\
&\le \int\limits_{x-x/\sqrt{n}}^x\left(\bigvee_{u}^x(f')_x\right)\textrm{d}u\\
&\le \left(\bigvee_{u}^x(f')_x\right) \int\limits_{x-x/\sqrt{n}}^x \textrm{d}u=\frac{x}{\sqrt{n}}\left(\bigvee_{u}^x(f')_x\right).
\end{align*}
Using Lemma \ref{lem3}, definition of $\kappa_{n,\mu}(x,u),$ and transformation $u=x-\frac{x}{t}$ we may write
   \begin{align*}K_7&\le \frac{\mu}{n+2}\varphi^2(x) \int\limits_0^{x-x/\sqrt{n}}|(f')_x(u)-(f')_x(x)|\frac{\textrm{d}u}{(u-x)^2}\\
  & \le \frac{\mu }{n+2}\varphi^2(x) \int\limits_0^{x-x/\sqrt{n}}\left(\bigvee_{u}^x(f')_x\right)\frac{\textrm{d}u}{(u-x)^2}\\
    &\le \frac{\mu}{n+2}\frac{\varphi^2(x) }{x^2} \int\limits_1^{\sqrt{n}} \left(\bigvee_{x-\frac{x}{t}}^x(f')_x\right)dt\\
   &\le \frac{\mu }{n+2}\frac{\varphi^2(x) }{x^2}\sum_{k=1}^{|\sqrt{n}|}\left(\bigvee_{x-\frac{x}{t}}^x(f')_x\right).
\end{align*}
 Combining the estimates of $I_7$ and $I_8,$ we have
    \begin{align*}|K_5|&\le \frac{\mu }{n+2}\frac{\varphi^2(x) }{x^2}\sum_{k=1}^{|\sqrt{n}|}\left(\bigvee_{x-\frac{x}{t}}^x(f')_x\right)
   + \frac{x}{\sqrt{n}}\left(\bigvee_{u}^x(f')_x\right)
   \end{align*}
   In order to estimate $K_6,$ we use integration by parts, Lemma \ref{lem3} and transformation $z=x+\frac{1-x}{\sqrt{n}}.$ Therefore,we proceed as follows:
   \begin{align*}|K_6|&=\left|\int\limits_x^1\left(\int\limits_x^u (f')_x(t)dt\right)W_{n,\mu}(x,u)\textrm{d}u\right|\\
  &=\Bigg|\int\limits_x^z\left(\int\limits_x^u (f')_x(t)dt\right)\frac{\partial}{\partial u}(1-\kappa_{n,\mu}(x,u))\textrm{d}u+\int\limits_z^1\left(\int\limits_x^u (f')_x(t)dt\right)\frac{\partial}{\partial u}(1-\kappa_{n,\mu}(x,u))\textrm{d}u\Bigg|\\
   &=\Bigg|\left[\int\limits_x^u (f')_x(t)dt(1-\kappa_{n,\mu}(x,u))\right]_{x}^{z}-\int\limits_x^z (f')_x(u)(1-\kappa_{n,\mu}(x,u))\textrm{d}u\\
   &+\left[\int\limits_x^u (f')_x(t)dt(1-\kappa_{n,\mu}(x,u)) \right]_{z}^{1}-\int\limits_z^1 (f')_x(u)(1-\kappa_{n,\mu}(x,u))\textrm{d}u\Bigg|\\
   &=\left|\int\limits_x^z (f')_x(u)(1-\kappa_{n,\mu}(x,u))\textrm{d}u+\int\limits_z^1 (f')_x(u)(1-\kappa_{n,\mu}(x,u))\textrm{d}u\right|\\
   &\le \frac{\mu}{n+2}.\varphi^2(x)  \int\limits_z^1\left(\bigvee_{x}^u(f')_x\right)(u-x)^{-2}\,\textrm{d}u+\int\limits_x^z
    \left(\bigvee_{x}^u(f')_x\right)\,\textrm{d}u\\
    &\le \frac{\mu}{n+2}.\varphi^2(x) \int\limits_{x+\frac{1-x}{\sqrt{n}}}^1\left(\bigvee_{x}^u(f')_x\right)(u-x)^{-2}\,\textrm{d}u
    +\frac{1-x}{\sqrt{n}}
    \left(\bigvee_{x}^{x+\frac{1-x}{\sqrt{n}}}(f')_x\right).
    \end{align*}
Now, substituting  $t=\frac{1-x}{t-x},$ we get
\begin{align*}|K_6| &\le \frac{\mu}{n+2}.\varphi^2(x) \int\limits_{1}^{\sqrt{n}}\left(\bigvee_{x}^{x+\frac{1-x}{t}}(f')_x\right)(1-x)^{-1}\,dt+
\frac{1-x}{\sqrt{n}}\left(\bigvee_{x}^{x+\frac{1-x}{\sqrt{n}}}(f')_x\right)\\
   &\le \frac{\mu }{n+2}\frac{\varphi^2(x)}{1-x} \sum_{k=1}^{\sqrt{n}}\left(\bigvee_{x}^{x+\frac{1-x}{k}}(f')_x\right)+\frac{1-x}{\sqrt{n}}\left(\bigvee_{x}^{x+\frac{1-x}{\sqrt{n}}}(f')_x\right).
   \end{align*}
Combining the estimates of $K_1-K_8,$ we get the desired result. Hence the proof follows.
\end{proof}


\begin{thebibliography}{}
\bibitem{ana2019} A. M. Acu, V. Gupta and G. Tachev,  Better numerical approximation by Durrmeyer type operators, Results Math. (2019) 74-90.
  \bibitem{MMD} M.M. Derriennic, Sur l'approximation de fonctions int\'{e}grables sur [0, 1] par des
polyn\^{o}mes de Bernstein modifies, J. Approx. Theory, 31(1981), 325-343.
 \bibitem{ZDKI1989} Z. Ditzian and K. Ivanov, Bernstein-type operators and their derivatives, J. Approx. Theory 56(1)
(1989), 72--90.
 \bibitem{Ditzian1987} Z. Ditzian and  V. Totik, Moduli of Smoothness, Springer-Verlag, New York, (1987).
 \bibitem{jld1967}  J. L. Durrmeyer, Une formule d'inversion de la transform\'{e}e de Laplace:
applications \`{a} la th\'{e}orie des moments, Th\'{e}se de 3e cycle,  Paris, 1967.
\bibitem{HGonska} H. H. Gonska, On approximation of continuously differentiable functions
by positive linear operators, Bull. Austral. Math. Soc., Vol. 27 (1983), 73--81.
\bibitem{HHGXLZ1991}  H. H. Gonska and X. L. Zhou, A global inverse theorem on simultaneous approximation by
Bernstein-Durrmeyer operators, J. Approx. Theory 67(3) (1991), 284--302.
 \bibitem{vgrpa}V. Gupta and  R.P. Agarwal, Convergence Estimates In Approximation Theory, Springer, New York (2014).
 \bibitem{vg2009} V. Gupta, A.-J. L.-Moreno and J.-M. L.-Palacios, On simultaneous approximation of the Bernstein Durrmeyer operators, Appl. Math. Comput. 213(1) (2009), 112-120.

\bibitem{KajlaandAcar} A. Kajla and T. Acar,  Modified $\alpha-$Bernstein operators with
better approximation properties, Ann. Funct. Anal. 10(4) (2019), 570--582.
\bibitem{Khosravianetal} H. K.-Arab, M. Dehghan and M. R. Eslahchi, A new approach to improve the order
of approximation of the Bernstein operators: Theory and applications, Numer. Algorithms
77(1) (2018), 111--150.
\bibitem{AL1972} A. Lupa\c{s}, Die Folge der Betaoperatoren, Dissertation, Universit\"{a}t Stuttgart, (1972).
 \bibitem{ozarslan} M. A. \"{O}zarslan and H. Aktu\v{g}lu, Local approximation for certain King type operators, Filomat 27(1) (2013), 173--181.
\bibitem{srivastavhmvgupta2003}H.M. Srivastava and V. Gupta, A certain family of summation-integral type operators, Math. Comput. Modelling, 37(2003), 1307--1315.

\end{thebibliography}
\end{document}